\theoremstyle{plain}
\newtheorem{theorem}{Theorem}[section]
\newtheorem{lemma}[theorem]{Lemma}
\theoremstyle{definition}
\theoremstyle{claim}
\theoremstyle{remark}
\newtheorem{remark}[theorem]{Remark}
\numberwithin{equation}{section}
\begin{document}

\title{A note on the singular set of the no-sign obstacle problem}

\author 
{Shibing Chen, Yibin Feng and Yuanyuan Li}

\address{Shibing Chen, School of Mathematical Sciences,
University of Science and Technology of China,
Hefei, 230026, P.R. China.}
\email{chenshib@ustc.edu.cn}

\address{Yibin Feng, School of Mathematical Sciences,
	University of Science and Technology of China,
	Hefei, 230026, P.R. China.}
\email{fybt1894@ustc.edu.cn}

\address{Yuanyuan Li, School of Mathematical Sciences,
	University of Science and Technology of China,
	Hefei, 230026, P.R. China.}
\email{lyysa20001025@mail.ustc.edu.cn}

\subjclass{}

\thanks{The first author would like to thank Professor  Shahgholian for his valuable suggestions.}

\keywords{No-sign obstacle problem; Singular point; Blowup; Uniqueness}

 \begin{abstract}
In this note,  we prove the uniqueness of blowups at  singular points of the no-sign obstacle problem $\Delta u=\chi_{_{B_1\backslash \{u=|Du|=0\}}}\ \text{in}\ B_1,$ thus give a positive answer to a problem raised in \cite[Notes of Chaper 7, page 149]{PSU12}. 
  \end{abstract}

 \maketitle

\baselineskip=15.8pt
\parskip=3pt

\section{Introduction}
In this note we consider the no-sign obstacle problem 
\begin{equation}\label{PA}
\Delta u=\chi_{\Omega}\ \text{in}\ B_1,
\end{equation}
where $\Omega=B_1\backslash\{u=|Du|=0\}.$ Denote by $\Gamma:=\partial\Omega\cap B_1$ the free boundary. Denote by $\Sigma$ the singular set of $\Gamma,$ namely,  $x^0\in\Sigma$ iff there exists a sequence $r_j\rightarrow 0$ such that
$u_{x^0,r_j}(x):=\frac{u(x^0+r_jx)}{r_j^2}$ converges to a homogeneous quadratic polynomial $q$ such that $\Delta q=1.$ Such $q$ is called a blowup of $u$ at $x^0.$
As pointed out in \cite[Notes of Chaper 7, page 149]{PSU12}, it is not known whether or not the blowup is unique at a singular point $x^0,$ namely, maybe $u_{x^0,r}$ will sub-converge to a different polynomial for another sequence $\tilde{r}_j\rightarrow 0.$ In this short note we will give a positive answer to this problem.
Denote $$\mathcal{Q}:=\{q(x)\ \text{homogeneous quadratic polynomial:}\ \Delta q=1\},$$
$$\mathcal{Q}^+:=\{q\in\mathcal{Q}: q\geq 0\}.$$
The main result we get is the following theorem.
\begin{theorem}\label{main}
Suppose $u$ solves \eqref{PA}, and $0\in \Sigma.$ Then $u_r(x):=\frac{u(rx)}{r^2}$ converges to a homogeneous quadratic polynomial $q_0\in \mathcal{Q}$ in $C^{1,\alpha}_{loc}(\mathbb{R}^n)$ for any $\alpha\in (0,1).$
\end{theorem}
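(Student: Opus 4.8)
The plan is to run a two–monotonicity–formula scheme of Weiss–Monneau type; the only genuinely new difficulty is that, in the no–sign setting, a blowup $q$ need not have a sign.

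First I would record the compactness input: by the $C^{1,1}$ regularity of solutions of \eqref{PA}, the rescalings $u_r$ are uniformly bounded in $C^{1,\alpha}_{loc}(\R^n)$, so every sequence $u_{r_j}$ has a subsequence converging in $C^{1,\alpha}_{loc}$ to a global solution, and since $0\in\Sigma$ at least one such limit is a polynomial $q\in\mathcal Q$. Then I would introduce the Weiss energy
$$W(r,u):=\frac{1}{r^{n+2}}\int_{B_r}\left(|Du|^2+2u\right)dx-\frac{2}{r^{n+3}}\int_{\partial B_r}u^2\,d\sigma,$$
which scales as $W(r,u)=W(1,u_r)$ and whose derivative is $W'(r)=\frac{2}{r^{n+2}}\int_{\partial B_r}\big(\partial_\nu u-\tfrac{2u}{r}\big)^2 d\sigma\ge0$ (here the no–sign structure enters through $u=|Du|=0$ on $B_1\setminus\Omega$). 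Hence $W(0^+,u)$ exists, vanishing of $W'$ along a blowup forces $2$–homogeneity of every blowup, and a direct computation gives $W(1,q)=\int_{B_1}q\,dx=\tfrac{1}{2n}\int_{B_1}|x|^2dx=:\beta_n$, the same constant for every $q\in\mathcal Q$. Thus $W(0^+,u)=\beta_n$ and every blowup lies in $\mathcal Q$.

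Fix the polynomial blowup $q$ obtained along some $r_j\to0$, set $w_r:=u_r-q$, and define
$$M(r):=\frac{1}{r^{n+3}}\int_{\partial B_r}(u-q)^2\,d\sigma=\int_{\partial B_1}w_r^2\,d\sigma.$$
Differentiating in $r$ via $\partial_r u_r=\tfrac1r(x\cdot Du_r-2u_r)$, integrating by parts on $B_1$, and using $\Delta u_r=\chi_{B_1\setminus\Lambda_r}$ with $\Delta q=1$, so that $\Delta w_r=-\chi_{\Lambda_r}$, where $\Lambda_r:=\{u_r=|Du_r|=0\}$ and $w_r=-q$ on $\Lambda_r$, I obtain the identity
$$M'(r)=\frac{2}{r}\Big[\big(W(r,u)-\beta_n\big)+\int_{B_1\cap\Lambda_r}q\,dx\Big].$$
The first term is nonnegative by the Weiss formula. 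In the sign–constrained problem $q\in\mathcal Q^+$, the contact term is nonnegative and $M$ is monotone; this is the classical Monneau formula, and it fails here because for $q\in\mathcal Q\setminus\mathcal Q^+$ the contact term $\int_{B_1\cap\Lambda_r}q$ has no definite sign. \textbf{This is the step I expect to be the main obstacle.} The key observation toward controlling it is that $u_r=0$ on $\Lambda_r$, whence $q=-w_r=-(u_r-q)$ there, so $\Lambda_r$ concentrates as $r\to0$ on $\{q=|Dq|=0\}$, where $q$ vanishes. Quantifying this — combining $|q|\le C|x|^2$, the relation $q=-w_r$ on $\Lambda_r$, and the density decay $|\Lambda\cap B_s|=o(|B_s|)$ of the contact set at a singular point — I would aim at the summability
$$\int_0^{\rho}\frac{1}{s}\Big|\int_{B_1\cap\Lambda_s}q\,dx\Big|\,ds=\int_0^{\rho}s^{-n-3}\Big(\int_{\Lambda\cap B_s}|q|\,dx\Big)ds<\infty,$$
which is the technical heart of the argument and requires a Dini–type rate, not merely vanishing density, for the contact set near $0$.

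Granting this summability, strict monotonicity of $M$ is not needed. Since $W(r,u)-\beta_n\ge0$, integrating the identity for $M'$ on $(r,r_j)$ gives $M(r)\le M(r_j)+\int_0^{r_j}\tfrac{2}{s}\big|\int_{B_1\cap\Lambda_s}q\big|\,ds$ for every $r<r_j$; letting $j\to\infty$, with $M(r_j)\to0$ along the chosen sequence and the tail integral vanishing, forces $\limsup_{r\to0}M(r)=0$. Hence $u_r\to q$ in $L^2(\partial B_1)$ along every sequence $r\to0$. Finally I would upgrade this to $C^{1,\alpha}_{loc}(\R^n)$: any $C^{1,\alpha}_{loc}$ subsequential limit $p$ of $u_r$ is a $2$–homogeneous element of $\mathcal Q$ with $\|p-q\|_{L^2(\partial B_1)}=0$, hence $p\equiv q=:q_0$; by uniqueness of the limit the full family $u_r$ converges to $q_0$ in $C^{1,\alpha}_{loc}(\R^n)$, proving Theorem \ref{main}.
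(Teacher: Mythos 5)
Your setup (uniform $C^{1,1}$ compactness, the Weiss energy for the no-sign problem, the fact that every blowup at $0\in\Sigma$ lies in $\mathcal{Q}$ with $W(0^+,u)=\beta_n$, and the identity $M'(r)=\frac{2}{r}\bigl[(W(r,u)-\beta_n)+\int_{B_1\cap\Lambda_r}q\,dx\bigr]$) is correct; in particular your observation that the Monneau computation only uses $u=|Du|=0$ on the contact set, never the sign of $u$ elsewhere, is exactly the right one. But the proof as written has a genuine gap at precisely the step you flag: the Dini summability $\int_0^{\rho}s^{-n-3}\bigl(\int_{\Lambda\cap B_s}|q|\,dx\bigr)ds<\infty$ is granted, not proved, and it is not obtainable from what is known a priori. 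At a singular point one only has $|\Lambda\cap B_s|=o(s^n)$ along with $|q|\le C|x|^2$, which gives $s^{-n-3}\int_{\Lambda\cap B_s}|q|\,dx=o(s^{-1})$ --- just barely non-integrable, as you note. Worse, the argument is circular: a Dini rate for the density of $\Lambda$ at $0$ (or even your intermediate claim that $\Lambda_r$ concentrates on $\{q=|Dq|=0\}$ for \emph{all} small $r$, not just along the chosen subsequence $r_j$) is essentially equivalent to uniqueness of the blowup together with a convergence rate, which is the very statement being proved; indeed, in the classical obstacle problem the convergence $u_r\to q_0$ can be arbitrarily slow, so no such rate can hold in general.

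The paper sidesteps this entirely by never taking the (possibly sign-changing) blowup as the comparison polynomial. By your own identity, for any $\bar q=\frac{1}{2}x\cdot Bx\in\mathcal{Q}^+$ the contact term becomes $+\int_{B_1\cap\Lambda_r}\bar q\,dx\ge 0$, so $M(r,u,\bar q)$ is monotone (this is the paper's Lemma \ref{mf}) and $M(0^+,u,\bar q)$ exists. Consequently any two subsequential blowups $q=\frac{1}{2}x\cdot Ax$ and $\tilde q=\frac{1}{2}x\cdot\tilde Ax$ satisfy $\int_{\partial B_1}\bigl(\frac{1}{2}x\cdot(A-B)x\bigr)^2dH^{n-1}=\int_{\partial B_1}\bigl(\frac{1}{2}x\cdot(\tilde A-B)x\bigr)^2dH^{n-1}$ for \emph{every} positive semidefinite $B$ with $Tr(B)=1$. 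Diagonalizing $A-\tilde A$, inserting the one-parameter family $B^t$ with $b^t_{i_0i_0}=\frac{1}{2}-\frac{1}{2}t$, $b^t_{j_0j_0}=\frac{1}{2}+\frac{1}{2}t$ (which stays in $\mathcal{Q}^+$ for $t\in(-1,1)$), differentiating in $t$, and using $\int_{\partial B_1}x_1^4=3\int_{\partial B_1}x_1^2x_2^2$ forces all eigenvalues of $A-\tilde A$ to coincide, hence to vanish since the trace is zero. No rate, and no control of the contact term for sign-changing $q$, is ever needed. If you replace your single-$q$ almost-monotonicity by this family argument over $\mathcal{Q}^+$, your compactness preamble and final uniqueness-of-limit paragraph complete the proof exactly as in the paper.
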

Then, we can establish a uniform estimate of the convergence rate similarly to the classical obstacle problem.

The proofs of Theorem \ref{main} and Theorem \ref{UC} is based on Monneau's monotonicity formula, which works for the classical obstacle problem well. However, it was not expected that it  also works for the no-sign obstacle problem.
\begin{lemma}[Monneau's monotonicity formula] \label{mf}Suppose $u$ satisfies \eqref{PA}, and $0\in \Sigma.$ Then for any $q\in \mathcal{Q}^+,$ the functional
$$r\rightarrow M(r, u, q)=\frac{1}{r^{n+3}}\int_{\partial B_r}(u-q)^2dH^{n-1}$$
is monotone nondecreasing for $r\in (0, 1).$
\end{lemma}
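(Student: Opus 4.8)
The plan is to follow the Weiss--Monneau strategy, reducing the monotonicity of $M$ to the (easier) Weiss energy monotonicity. First I would rescale: writing $u_r(x)=u(rx)/r^2$ and $v_r:=u_r-q$, the change of variables $y=x/r$ together with the $2$-homogeneity of $q$ (so that $q(rx)/r^2=q(x)$) gives $M(r,u,q)=\int_{\partial B_1}v_r^2\,dH^{n-1}$. Since $\partial_r u_r=\tfrac1r(x\cdot\nabla u_r-2u_r)$ and $x\cdot\nabla q=2q$, one has $\partial_r v_r=\tfrac1r(x\cdot\nabla v_r-2v_r)$, and differentiating under the integral sign yields
\[
M'(r)=\frac{2}{r}\Big(\int_{\partial B_1}v_r\,\partial_\nu v_r\,dH^{n-1}-2\int_{\partial B_1}v_r^2\,dH^{n-1}\Big)=:\frac{2}{r}A(r),
\]
so that the whole statement reduces to proving $A(r)\ge 0$.

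Next I would apply Green's identity to the boundary term. Writing $\Lambda_r:=\{u_r=|\nabla u_r|=0\}$ for the rescaled coincidence set, so that $\Delta u_r=1-\chi_{\Lambda_r}$ and $\Delta v_r=-\chi_{\Lambda_r}$, integration by parts gives $\int_{\partial B_1}v_r\,\partial_\nu v_r=\int_{B_1}|\nabla v_r|^2+\int_{B_1}v_r\,\Delta v_r$. The crucial point is that on $\Lambda_r$ we have $u_r=0$, hence $v_r=-q$ there, so
\[
\int_{B_1}v_r\,\Delta v_r=-\int_{B_1}v_r\,\chi_{\Lambda_r}=\int_{\Lambda_r\cap B_1}q\,dx\ge 0,
\]
where nonnegativity is exactly where the hypothesis $q\in\mathcal Q^+$ (i.e.\ $q\ge 0$) enters.

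The remaining task is to control $\int_{B_1}|\nabla v_r|^2-2\int_{\partial B_1}v_r^2$, and here I would bring in the Weiss energy
\[
W(r,u)=\frac{1}{r^{n+2}}\int_{B_r}\big(|\nabla u|^2+2u\big)\,dx-\frac{2}{r^{n+3}}\int_{\partial B_r}u^2\,dH^{n-1}.
\]
Expanding $A(r)$ in terms of $u_r$ and $q$ and using Green's identity on the cross term $\int_{B_1}\nabla u_r\cdot\nabla q$ (with $\partial_\nu q=2q$ on $\partial B_1$ and $\Delta q=1$), all mixed boundary integrals cancel and I expect to arrive at the clean identity
\[
A(r)=W(r,u)-W(r,q)+\int_{\Lambda_r\cap B_1}q\,dx.
\]
The decisive observation is that $W(r,q)=\int_{B_1}q\,dx$ depends only on $\Delta q=\mathrm{tr}(D^2q)=1$ and not on $q$ itself (since $\int_{B_1}x_ix_j=\delta_{ij}\,n^{-1}\int_{B_1}|x|^2\,dx$), so $W(\cdot,q)$ is the same constant for every $q\in\mathcal Q$; in particular it equals $W(0^+,u)$, the Weiss energy of a blowup. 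Combined with the Weiss monotonicity $W(r,u)\ge W(0^+,u)$ and the nonnegative term above, this gives $A(r)\ge 0$.

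The main obstacle is thus establishing the Weiss monotonicity in the no-sign setting, and this is where the problem genuinely differs from the classical one. Differentiating the rescaled Weiss energy produces an inhomogeneous term $2\int_{\Lambda_r}\partial_r u_r\,dx$; I would show it vanishes because $\partial_r u_r=\tfrac1r(x\cdot\nabla u_r-2u_r)\equiv 0$ on $\Lambda_r=\{u_r=|\nabla u_r|=0\}$, which is precisely the set removed in the definition of $\Omega$. After this cancellation the standard computation yields $W'(r)=\tfrac2r\int_{\partial B_1}(x\cdot\nabla u_r-2u_r)^2\,dH^{n-1}\ge 0$. It is exactly this vanishing --- rather than a sign condition on $u$ --- that makes the Weiss, and hence the Monneau, formula survive for the no-sign problem; verifying it rigorously, including the $C^{1,1}$ regularity of $u$ needed to justify all integrations by parts, is the technical heart of the argument.
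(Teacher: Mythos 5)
Your proof is correct, and in fact the paper states Lemma 1.2 without supplying any proof at all (it is only invoked in Section 2), so your derivation is precisely the standard Weiss--Monneau argument that the authors implicitly rely on: the identities $M'(r)=\frac{2}{r}A(r)$, $A(r)=W(r,u)-W(r,q)+\int_{\Lambda_r\cap B_1}q\,dx$ with $\Lambda_r=\{u_r=|\nabla u_r|=0\}$, and the constancy of $W(\cdot,q)$ over all of $\mathcal{Q}$ all check out. You also correctly isolate the two (and only two) places where the no-sign structure enters --- the term $2\int_{\Lambda_r\cap B_1}\partial_r u_r\,dx$ in the Weiss derivative vanishes because $x\cdot\nabla u_r-2u_r\equiv 0$ on $\Lambda_r$, and $\int_{B_1}v_r\Delta v_r=\int_{\Lambda_r\cap B_1}q\,dx\geq 0$ uses only $u_r=0$ on $\Lambda_r$ together with $q\geq 0$, never a sign condition on $u$ --- which is exactly the observation that makes the lemma survive in this setting (one minor remark: full $C^{1,1}$ regularity is not strictly needed for the integrations by parts; the $W^{2,p}_{loc}$ bounds from $\Delta u\in L^{\infty}$ already suffice).
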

Note that $M(r, u, q)=M(1, u_r, q).$ In the following we will call a constant universal if it depends only on $n$ and  $\|u\|_{L^\infty(B_1)}.$ 
\begin{remark}
It is well known that Monneau's monotonicity formula can be used to prove the uniqueness of blowup at singular points for the classical obstacle problem. However,  it is not known whether it works for the no-sign obstacle problem or not.
Indeed, it is mentioned in \cite[Notes of Chaper 7, page 149]{PSU12} that the only method known to work for the no-sign obstacle problem is the approach of \cite{Caf98}, adapted in \cite{CS04}
where the main tool used is the ACF monotonicity formula.
\end{remark}

\section{Proof of Theorem \ref{main}}
For the classical obstacle problem, suppose that there is a sequence $r_j\rightarrow 0$ such that $u_{r_j}\rightarrow q_0\in \mathcal{Q}^+,$ then by taking $q=q_0$ in Lemma \ref{mf} and using the monotonicity of $M(r, u, q_0),$ we have that $M(0+, u, q_0)=0.$
Then it easily follows that $u_r\rightarrow q_0$ in $C^{1,\alpha}_{loc}(\mathbb{R}^n)$ for any $\alpha\in (0,1).$ For the no-sign obstacle problem, a blowup $q_0$ may be non-positive, hence we can not take $q=q_0$ in Monneau's monotonicity formula. 

\begin{proof}[Proof of Theorem \ref{main}]
Suppose 
\begin{equation}\label{e1}
u_{r_j}\rightarrow q,\ u_{\tilde{r}_j}\rightarrow \tilde{q}\ \text{in}\ C^{1,\alpha}_{loc}(\mathbb{R}^n),
\end{equation}
for two sequences $r_j\rightarrow 0, \tilde{r}_j\rightarrow 0,$ where $q=\frac{1}{2}x\cdot Ax\in \mathcal{Q}$ and $\tilde{q}=\frac{1}{2}x\cdot \tilde{A}x\in \mathcal{Q}.$
Note that $A$ and $\tilde{A}$ are two symmetric matrices satisfying $Tr(A)=Tr(\tilde{A})=1.$
We only need to show that $A=\tilde{A}.$

Given any $\bar q=\frac{1}{2}x\cdot Bx\in \mathcal{Q}^+, $ by Lemma \ref{mf} we have that 
\begin{equation}\label{e2}
M(1, u_r, \bar q)=\int_{\partial{B_1}}(u_r-\bar q)^2dH^{n-1}\ \text{is monotone nondecreasing for}\ r\in (0, 1).
\end{equation}
By \eqref{e1} and \eqref{e2} we have that
\begin{equation}\label{matrix}
\int_{\partial B_1}\left(\frac{1}{2}x\cdot (A-B)x\right)^2dH^{n-1}=\int_{\partial B_1}\left(\frac{1}{2}x\cdot (\tilde A-B)x\right)^2dH^{n-1}.
\end{equation}
Since $A-\tilde{A}$ is symmetric,
by a rotation of coordinates we may assume $A-\tilde{A}$ is diagonalized with eigenvalues
$\lambda_1, \cdots, \lambda_n.$
Now, we choose $B=B^{t}=(b^t_{ij})_{i,j=1}^n,$ where 
$b^t_{11}=\frac{1}{2}-\frac{1}{2}t,$ $b^t_{22}=\frac{1}{2}+\frac{1}{2}t,$ and $b^{t}_{ij}=0$  for all other $i, j.$ 
It is easy to see that $B^{t}\geq 0$ and $Tr(B^{t})=1,$ provided $t\in (-1, 1),$ hence 
$\frac{1}{2}x\cdot B^{t}x\in \mathcal{Q}^+.$

Now, by \eqref{matrix} we have that 
\begin{equation}\label{e6}
f(t):=\int_{\partial B_1}\left(x\cdot (A-\tilde{A})x\right)\left(x\cdot (A+\tilde{A}-2B^{t})x\right)dH^{n-1}=0.
\end{equation}
 Hence 
\begin{equation}\label{e3}
x\cdot (A-\tilde{A})x=\sum_{i=1}^n\lambda_ix_i^2.
\end{equation}
Note that $Tr(A-\tilde{A})=\sum_{i=1}^n\lambda_i=0.$ 
It is also straightforward to compute that 
\begin{equation}\label{e4}
\frac{d\left(x\cdot (A+\tilde{A}-2B^{t})x\right)}{dt}=x_1^2-x_2^2.
\end{equation}

It follows from  \eqref{e6}, \eqref{e3} and \eqref{e4} that
\begin{eqnarray}
0=f'(t)=\frac{df}{dt}&=&\int_{\partial B_1}\left(\sum_{i=1}^n\lambda_ix_i^2\right)(x_1^2-x_2^2)\\
&=&\int_{\partial B_1}\left(\lambda_1x_1^4-\lambda_2x_2^4-(\lambda_1-\lambda_2)x_1^2x_2^2\right)dH^{n-1}\\
&+&\int_{\partial B_1}\left(\sum_{i=3}^n\lambda_ix_i^2\right)(x_1^2-x_2^2)dH^{n-1}
\end{eqnarray}
By the symmetry of $\partial B_1$ we have that $\int_{\partial B_1}x_1^4=\int_{\partial B_1}x_2^4,$ and that $\int_{\partial B_1}x_1^2x_i^2=\int_{\partial B_1}x_2^2x_i^2$ for all $i=3,\cdots, n.$
Hence from the above computation we have 
\begin{equation}\label{e8}
f'(t)=(\lambda_1-\lambda_2)\left(\int_{\partial B_1}x_1^4-\int_{\partial B_1}x_1^2x_2^2\right)=0.
\end{equation}
A direct computation shows that $\int_{\partial B_1}x_1^4=3\int_{\partial B_1}x_1^2x_2^2,$ hence
$\int_{\partial B_1}x_1^4-\int_{\partial B_1}x_1^2x_2^2\ne 0,$ which implies $\lambda_1=\lambda_2.$ 
Similarly, fix any $1\leq i_0<j_0\leq n,$
 we can choose $B^{t}=(b^t_{ij})_{i,j=1}^n,$ where 
$b^t_{i_0i_0}=\frac{1}{2}-\frac{1}{2}t,$ $b^t_{j_0j_0}=\frac{1}{2}+\frac{1}{2}t,$ and $b^{t}_{ij}=0$  for all other $i, j.$ The same argument gives that
$\lambda_{i_0}=\lambda_{j_0}.$ 
Hence $\lambda_1=\lambda_2=\cdots=\lambda_n.$ Using the fact that $\sum_{i=1}^n\lambda_i=0,$ we conclude that $\lambda_i=0$ for any $i=1,\cdots, n.$ Therefore $A=\tilde{A}.$

\end{proof}

\bibliographystyle{amsplain}

\end{document}